\documentclass[a4paper, 12pt, roman, titlepage]{article}
\usepackage{MRHStyle}
\usepackage{JournalOfExpMath}

\usepackage[square]{natbib}
\bibliographystyle{plainnat}
\setcitestyle{citesep={;}}

\renewcommand{\phi}{\ensuremath{\varphi}}

\sloppy

\begin{document}
\title{\Large \textbf{Bounds for zeros of Collatz polynomials, with necessary and sufficient strictness conditions}}
\author{\Large Matt Hohertz \\ Department of Mathematics, Rutgers University \\ matt.hohertz@rutgers.edu \\ ORCID 0000-0001-6724-1034}

\maketitle

\begin{abstract}
	In a previous paper, we introduced the Collatz polynomials $P_N\inps{z}$, whose coefficients are the terms of the Collatz sequence of the positive integer $N$.  Our work in this paper expands on our previous results, using the Eneström-Kakeya Theorem to tighten our old bounds of the roots of $P_N\inps{z}$ and giving precise conditions under which these new bounds are sharp.  In particular, we confirm an experimental result that zeros on the circle $\sbld{z\in\C:\aval{z} = 2}$ are rare: the set of $N$ such that $P_N\inps{z}$ has a root of modulus 2 is sparse in the natural numbers.  We close with some questions for further study.
\end{abstract}

\textsc{keywords} Polynomial zeros, Collatz conjecture, generating functions \par
\textsc{word count} 1295
\section*{Biographical note}
Matt Hohertz received his Ph.D. in 2022 from Rutgers University.  His thesis, titled \ic{Expanding the Geometric Modulus Principle}, explores the behavior of analytic functions near critical points.  His recent research is in one-variable complex analysis, harmonic function theory, and polynomial root isolation.

\section{Introduction}
Let $c(N)$ be the Collatz iterate of a number $N\in\N\cup\sbld{0}$: \ic{i.e.},
\begin{equation}
	c(N) := \begin{cases}
		\frac{N}{2}, &N\mbox{ even} \\
		\frac{3N+1}{2}, &N\mbox{ odd and not $1$} \\
		0, &N = 1.
	\end{cases}
\end{equation}
Also, let $c^j(N)$ have the standard meaning
\begin{equation}
	c^j(N) := \begin{cases}
		N, &j = 0 \\
		c\inps{c^{j-1}(N)}, &j \geq 1,
	\end{cases}
\end{equation}
and define $n(N)$ (just $n$ when $N$ is clear from context) as 
\begin{equation} n(N):=\min \sbld{j\in\N : c^j(N) = 1}.
\end{equation} 
In this paper, we assume\footnote{\ic{cf.} item \ref{item:polyn} of Section \ref{sec:final-remarks}} that $n\inps{N} < \infty$ for all $N$, even though this remains an open question as of January 2022 \citep{boas}.  As in \citep{hohertz_kalantari}, we define the \ic{$N^{th}$ Collatz polynomial} to be
\begin{equation}
	P_N(z) := \sum_{j=0}^{n\inps{N}} c^j(N)\cdot z^j,
\end{equation}
\ic{i.e.}, the polynomial whose coefficients are the Collatz iterates of $N$, or equivalently consecutive members of the \ic{Collatz trajectory/sequence of $N$} (we will assume $N\in\N - \sbld{0,1}$ to avoid the trivial $P_0\inps{z} = 0$ and $P_1(z) = 1$).  Throughout the article, let $z_N$ signify an arbitrary root of $P_N(z)$ (which we call a \ic{Collatz zero}), and let $[k] := [1,k] \cap \N$. \par
We prove that
\begin{equation}
	\frac{2\cdot M(N)}{3\cdot M(N)+1} \leq \aval{z_N} \leq 2,
\end{equation} where $M(N)$ is the least odd iterate of $N$ other than 1.  In fact, we go somewhat further, proving that
\begin{equation}
	\frac{2\cdot M(N)}{3\cdot M(N)+1} < \aval{z_N} < 2
\end{equation} for almost all $N$ and giving precise conditions under which $N$ belongs to the sparse set of exceptions.

\section{General bounds}
\begin{lemma}[Eneström-Kakeya, Theorem A of \citep{eksharp}] \label{lemma:ek}
	Let $f(z) = a_kz^k + \cdots + a_1z + a_0$ have all strictly positive coefficients and set
	\begin{align}
		\alpha[f] &:= \min_{j=0,\cdots,k-1} \frac{a_j}{a_{j+1}} \\
		\beta[f] &:= \max_{j=0,\cdots,k-1} \frac{a_j}{a_{j+1}}.
	\end{align}
	Then
	\begin{equation} \label{eq:ek}
		\alpha[f] \leq \aval{w} \leq \beta[f]
	\end{equation} for all roots $w$ of $f(z)$.
\end{lemma}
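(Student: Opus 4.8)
The plan is to prove the upper bound $\aval{w}\le\beta[f]$ directly and then deduce the lower bound from it via a reciprocal-polynomial argument, so that only a single genuine modulus estimate is needed. For the upper bound, write $\beta:=\beta[f]$ and observe that the defining inequalities $a_k\le\beta a_{k+1}$ $(0\le k\le n-1)$ say precisely that the rescaled polynomial $\tilde f(u):=f(\beta u)=\sum_k (a_k\beta^k)u^k$ has nondecreasing positive coefficients $b_k:=a_k\beta^k$, since $b_k/b_{k+1}=a_k/(\beta a_{k+1})\le 1$. Because $w=\beta u$ gives $\aval{w}=\beta\aval{u}$ and $\beta>0$, it suffices to prove the classical monotone case: every root $u$ of a polynomial with $0<b_0\le b_1\le\cdots\le b_n$ satisfies $\aval{u}\le 1$.

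I would dispatch this base case with the standard device of multiplying by $(1-u)$, which converts the ratio hypothesis into a telescoping sum:
\[
(1-u)\tilde f(u) = b_0 + \sum_{k=1}^n (b_k - b_{k-1})u^k - b_n u^{n+1}.
\]
Assuming for contradiction that $\tilde f(u)=0$ with $\aval{u}>1$, the left-hand side vanishes, so
\[
b_n\aval{u}^{n+1} = \Bigl| b_0 + \sum_{k=1}^n (b_k - b_{k-1})u^k \Bigr| \le b_0 + \sum_{k=1}^n (b_k - b_{k-1})\aval{u}^k,
\]
where the triangle inequality is applied using $b_0>0$ and $b_k-b_{k-1}\ge 0$. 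Bounding each $\aval{u}^k\le\aval{u}^n$ (valid since $\aval{u}>1$) and telescoping $b_0+\sum_{k=1}^n (b_k-b_{k-1})=b_n$ yields $b_n\aval{u}^{n+1}\le b_n\aval{u}^n$, i.e.\ $\aval{u}\le 1$, a contradiction. Hence all roots of $\tilde f$ lie in the closed unit disc, which proves $\aval{w}\le\beta$ for $f$.

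Finally, for the lower bound I would apply the upper bound to the reverse polynomial $f^*(z):=z^n f(1/z)=\sum_k a_{n-k}\,z^k$. Since $a_0>0$, zero is not a root of $f$, so the roots of $f^*$ are exactly the reciprocals $1/w$ of the roots $w$ of $f$. A short index computation shows $\beta[f^*]=\max_k a_{k+1}/a_k=1/\alpha[f]$, so the already-established upper bound gives $\aval{1/w}\le 1/\alpha[f]$, i.e.\ $\alpha[f]\le\aval{w}$. I expect the only delicate point to be the bookkeeping — verifying that the rescaling $u=w/\beta$ and the reversal carry the quantities $\alpha[f],\beta[f]$ to one another correctly — together with being careful that the triangle-inequality step is licit precisely because every coefficient difference $b_k-b_{k-1}$ is nonnegative; this positivity is exactly the monotonicity hypothesis, and it is what makes the telescoping collapse to $b_n$.
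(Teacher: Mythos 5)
Your proof is correct, but no comparison of techniques is really possible here, because the paper does not prove Lemma \ref{lemma:ek} at all: its ``proof'' is a pointer to the literature (Theorem 10 of \citet{lao}, Theorem A of \citet{eksharp}). What you have written is a complete, self-contained argument where the paper defers, and it is the classical one: rescale by $u = w/\beta$ to reduce to the case of positive nondecreasing coefficients, multiply by $(1-u)$ to telescope, and apply the triangle inequality, where the nonnegativity of each $b_k - b_{k-1}$ is exactly what licenses that step. Every step checks out, including the implicit bound $b_0 \leq b_0\aval{u}^n$ on the constant term and the conclusion $\aval{u} \leq 1$ from $b_n\aval{u}^{n+1} \leq b_n\aval{u}^n$ (legitimate since $b_n > 0$ and $\aval{u} > 0$). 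Your reciprocal-polynomial derivation of the lower bound is also sound --- $f^*$ has degree $n$ because $a_0 > 0$, zero is not a root of $f$, and the index computation $\beta[f^*] = \max_k a_{k+1}/a_k = 1/\alpha[f]$ is right --- and it is precisely the device the paper itself introduces just before Theorem \ref{thm:lb}, where the reciprocal polynomial is defined and $\beta[\tilde{f}] = 1/\alpha[f]$ is computed in order to transfer strictness from the upper bound to the lower one; your proof makes the lower half of the lemma a corollary of the upper half in exactly that spirit. One cosmetic caution: you write $\tilde{f}$ for the rescaled polynomial $f(\beta u)$, whereas the paper reserves $\tilde{f}$ for the reciprocal polynomial $z^nf(1/z)$, so that name should be changed to avoid a clash if your text were merged into the paper. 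What your route buys is self-containedness, and it makes visible where equality can occur (only when no cancellation is lost in the triangle inequality, which foreshadows the sharpness analysis of Lemma \ref{lemma:thmb}); what the paper's citation buys is brevity plus access to Theorem A/B of \citet{eksharp}, whose finer equality conditions the paper genuinely needs later and which your argument, as written, does not reproduce.
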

With Lemma \ref{lemma:ek}, we prove the following general bound for $\aval{z_N}$:
\begin{theorem} \label{thm:bounds}
	\begin{equation} \label{eq:bounds}
		\frac{2\cdot M\inps{N}}{3\cdot M\inps{N} + 1} \leq \aval{z_N} \leq 2,
	\end{equation}
	where $M\inps{N}:=\max\Big(\sbld{-1/2}\cup\sbld{\ell > 1\::\:\ell\mbox{ an odd Collatz iterate of }N}\Big)$.
	\begin{proof}
		If $a_j$ is odd and not 1, then
		\begin{equation}
			\frac{a_j}{a_{j+1}} = \frac{2a_j}{3a_j + 1},
		\end{equation} and if $a_j$ is even, then
		\begin{equation}
			\frac{a_j}{a_{j+1}} = \frac{2a_j}{a_j} = 2.
		\end{equation}
		If $N$ has an odd iterate other than 1, then the conclusion follows by Lemma \ref{lemma:ek}.  Otherwise, $P_N(z)$ is the partial sum
		\begin{equation}
			N\cdot \inps{1 + \frac{z}{2} + \cdots + \frac{z^n}{2^n}},
		\end{equation}
		whose roots all lie on $\sbld{z\in\C : \aval{z} = 2}$.
	\end{proof}
\end{theorem}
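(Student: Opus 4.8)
The plan is to apply the Eneström--Kakeya bound (Lemma~\ref{lemma:ek}) directly to $P_N$. First I would verify the hypothesis: for $N\in\N-\sbld{0,1}$ the iterates $c^0(N),\dots,c^n(N)$ are all positive integers---by minimality of $n$ none of $c^0(N),\dots,c^{n-1}(N)$ equals $1$, while $c^n(N)=1$---so every coefficient $a_k=c^k(N)$ is strictly positive and the lemma applies. What remains is to evaluate $\alpha[P_N]$ and $\beta[P_N]$, i.e. to understand the consecutive ratios $a_k/a_{k+1}=c^k(N)/c^{k+1}(N)$.

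The crux is a parity split. If $a_k$ is even then $a_{k+1}=a_k/2$ and the ratio equals $2$; if $a_k$ is odd (hence $\neq 1$, since $k<n$) then $a_{k+1}=(3a_k+1)/2$ and the ratio is $\frac{2a_k}{3a_k+1}$. I would then note the two elementary facts that $x\mapsto \frac{2x}{3x+1}$ is strictly increasing on $(0,\infty)$ with supremum $\frac{2}{3}$, so every ``odd'' ratio is strictly below $\frac{2}{3}<2$ whereas every ``even'' ratio is exactly $2$. For the upper bound this gives $\beta[P_N]=2$ provided at least one iterate is even, and an even iterate is guaranteed because the only $c$-preimage of $1$ is $2$, forcing $a_{n-1}=2$; hence $\aval{z_N}\le 2$. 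For the lower bound, the minimum ratio must occur among the odd ratios (when any exist), and by monotonicity it occurs at the smallest odd iterate, which is exactly $M$; thus $\alpha[P_N]=\frac{2M}{3M+1}$ and $\aval{z_N}\ge \frac{2M}{3M+1}$. Combining the two through (\ref{eq:ek}) yields (\ref{eq:bounds}).

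The only case needing separate handling is when $N$ possesses no odd iterate other than $1$, i.e. $N$ is a power of $2$. Then every ratio equals $2$, so Lemma~\ref{lemma:ek} gives $\aval{z_N}=2$ immediately (equivalently $P_N$ is the geometric partial sum $N\sum_{k=0}^{n}(z/2)^k$, all of whose roots lie on $\aval{z}=2$). This is precisely the branch in which the definition of $M$ returns $-\frac{1}{2}$, and since $\frac{2M}{3M+1}=\frac{-1}{-1/2}=2$ the inequality (\ref{eq:bounds}) reads $2\le\aval{z_N}\le 2$ and still holds. I expect the main difficulty to be bookkeeping rather than analysis: establishing $\beta[P_N]=2$ rests on the ``penultimate iterate is $2$'' observation, pinning $\alpha[P_N]$ to the least odd iterate rests on monotonicity, and both must be reconciled with the power-of-$2$ degeneracy that the $\max\{-1/2,\cdot\}$ in the definition of $M$ is designed to absorb.
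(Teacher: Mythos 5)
Your proposal is correct and follows essentially the same route as the paper: apply the Enestr\"om--Kakeya bound after the parity split $\frac{a_k}{a_{k+1}} = \frac{2a_k}{3a_k+1}$ (odd) versus $2$ (even), then treat the power-of-$2$ case via the geometric partial sum. You merely make explicit some details the paper leaves implicit---the monotonicity of $x\mapsto\frac{2x}{3x+1}$ pinning $\alpha[P_N]$ at the least odd iterate $M$, the observation $a_{n-1}=2$, and the check that $M=-\frac{1}{2}$ yields $\frac{2M}{3M+1}=2$ in the degenerate case---all of which are accurate.
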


\section{Strictness of the lower bound}
\begin{lemma}[Theorem B of \citep{eksharp}] \label{lemma:thmb}
	For a polynomial $f(z)$ of degree $k$ with strictly positive coefficients, let $S[f]$ be the set of all $j\in[k+1]$ such that
	\begin{equation} \label{eq:strin}
		\beta\left[f\right] > \frac{a_{k-j}}{a_{k+1-j}}.
	\end{equation}Then the upper bound of Equation \eqref{eq:ek} is an equality if and only if $1 < d := \gcd\inps{j\in S[f]}$, in which case
	\begin{itemize}
		\item all the zeros on $\aval{z} = \beta[f]$ are simple and given by $\Big\{\beta[f]\cdot \exp\inps{\frac{2\pi i j}{d}},\;j=1,\cdots, d-1\Big\}$ and
		\item $f\inps{\beta[f]\cdot z} = \inps{1 + z + \cdots + z^{d-1}}\cdot q_m\inps{z^d}$ for a degree $m$ polynomial $q_m$ with strictly positive coefficients.
	\end{itemize} 
	Moreover, if $m > 0$, then all zeros of $q_m$ belong to $\D$ and $\beta[q_m] \leq 1$.
\end{lemma}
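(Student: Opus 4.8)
The plan is to normalize to $\beta[f] = 1$ and then exploit a telescoping identity. First I would pass to $g(z) := f(\beta[f]\,z)$, whose coefficients $b_k := a_k\,\beta[f]^k$ satisfy $b_k \le b_{k+1}$ for every $k$, with equality exactly when $a_k/a_{k+1} = \beta[f]$. A direct check shows $\beta[g] = 1$ and $S[g] = S[f]$, so $d$ is unchanged and it suffices to describe the roots of $g$ on $\aval{z} = 1$ and to factor $g$. The key device is the telescoped polynomial
\[
	(1-z)\,g(z) = b_0 + \sum_{k=1}^{n} (b_k - b_{k-1})\,z^k - b_n z^{n+1},
\]
whose interior coefficients $b_k - b_{k-1} \ge 0$ sum, together with $b_0$, to exactly $b_n$ by telescoping.

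For a root $z$ of $g$ with $\aval{z} = 1$ one has $z \ne 1$ (since $g(1) = \sum_k b_k > 0$), so $(1-z)g(z) = 0$, which I would rewrite as $b_n z^{n+1} = b_0 + \sum_{k} (b_k - b_{k-1})\,z^k$. Taking moduli, $b_n = \aval{b_n z^{n+1}} \le b_0 + \sum_k (b_k - b_{k-1}) = b_n$, so the right-hand side attains modulus equal to the sum of the moduli of its terms. As $b_0 > 0$ is a positive real, every term must then be a nonnegative real, forcing $z^k = 1$ at each index $k$ where $b_{k-1} < b_k$ (a ``jump''); substituting back yields $z^{n+1} = 1$ as well, and conversely any such $z$ on the circle is a root. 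Collecting the jump indices together with $n+1$ into a set $T$, the roots of $g$ on the circle are exactly $\sbld{z : z^{\gcd T} = 1}\setminus\sbld{1}$, which is nonempty precisely when $\gcd T > 1$. A brief gcd manipulation---using that $j \in S[f]$ corresponds to a jump at $k = n+1-j$, together with $n+1 \in S[f]$ (under the convention $a_{-1} = 0$)---gives $\gcd T = d$, which is exactly the claimed equality-versus-strictness dichotomy.

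When $d > 1$, the constraint $d \mid k$ at every jump means the $b_k$ are constant on each block of $d$ consecutive indices; writing $b_k = c_{\lfloor k/d\rfloor}$ with $0 < c_0 \le \cdots \le c_m$ and $n+1 = d(m+1)$, I would factor
\[
	g(z) = \inps{\sum_{r=0}^{d-1} z^r}\sum_{i=0}^{m} c_i (z^d)^i = \inps{1 + z + \cdots + z^{d-1}}\,q_m(z^d),
\]
where $q_m(w) := \sum_i c_i w^i$ has strictly positive coefficients. The circle roots are then the simple nontrivial $d$-th roots of unity (the zeros of $1 + \cdots + z^{d-1}$), and they stay simple because $q_m(1) = \sum_i c_i > 0$; undoing the normalization returns the stated set $\sbld{\beta[f]\exp(2\pi i j/d) : j = 1,\ldots,d-1}$. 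For the ``moreover'' clause, $\beta[q_m] \le 1$ is immediate from $c_i \le c_{i+1}$, and Lemma~\ref{lemma:ek} confines the zeros of $q_m$ to $\aval{w}\le 1$; if some zero $w_0$ had $\aval{w_0} = 1$, then any $z_0$ with $z_0^d = w_0$ would be a circle root of $g$, whence $z_0^d = 1$ and $q_m(1) = 0$, a contradiction---so all zeros of $q_m$ lie in $\D$.

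The step I expect to be the main obstacle is the equality case of the triangle inequality and the ensuing index bookkeeping: one must argue cleanly that the modulus of the sum equalling the sum of the moduli forces each jump term to be a positive real multiple of $b_0$, and then reconcile the two index conventions---the ``jump at $k$'' picture coming from $(1-z)g$ against the ``$a_{n-j}/a_{n+1-j}$'' picture defining $S[f]$---so that $\gcd T = d$ exactly, including the role of the always-present top index $n+1$. Once that is in place, the block-constancy of the coefficients, the factorization, and the location of the zeros of $q_m$ follow essentially formally.
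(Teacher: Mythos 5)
Your proof is correct, but there is nothing in the paper to compare it against: the paper does not prove this lemma at all, quoting it verbatim as Theorem B of \citet{eksharp} (note that, unlike Lemma \ref{lemma:ek}, it does not even carry a ``see the reference'' proof environment). What you have written is a complete, self-contained proof along the classical route to Eneström--Kakeya sharpness: normalize so $\beta[f]=1$, multiply by $(1-z)$ to telescope, and run the equality case of the triangle inequality. The delicate points are all handled: the positive real term $b_0$ anchors the common direction, forcing $z^k=1$ at every jump index and $z^{n+1}=1$ at the top; the bridge between your jump indices $k$ and the paper's $S[f]$ is the identity $\gcd(n+1,\,n+1-k)=\gcd(n+1,\,k)$, which indeed gives $\gcd T = d$; block-constancy of the $b_k$ when $d>1$ yields the factorization $g(z)=(1+z+\cdots+z^{d-1})\,q_m(z^d)$ with $0<c_0\le\cdots\le c_m$, whence $\beta[q_m]\le 1$ and, via $q_m(1)>0$, both the simplicity of the circle zeros and the strict containment of the zeros of $q_m$ in the open disk. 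One bookkeeping remark worth making explicit: your claim that $n+1\in S[f]$ always holds rests on the convention $a_{-1}=0$, which the paper only articulates later, inside the proof of Theorem \ref{thm:s}; since the lemma's statement quantifies $j$ over $[n+1]$ and $a_{n-j}$ is undefined there for $j=n+1$ without that convention, you are right to flag it, and your proof would benefit from stating it up front. Net effect: you have replaced an external citation with an elementary argument that, in all likelihood, mirrors the method of the cited source, and nothing in it fails.
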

Define the \ic{reciprocal polynomial} $\tilde{f}$ of the degree $k$ polynomial $f(z) = a_kz^k + \cdots + a_jz^j + \cdots + a_0$, for which we assume $a_0a_k\neq 0$, to be 
\begin{align}
	\tilde{f}(z) &:= z^k\cdot f\inps{\frac{1}{z}} \\
	&= a_0z^k + \cdots + a_{k-j}z^j + \cdots + a_k.
\end{align}
Note that 
\begin{align}
	\beta[\tilde{f}] &= \max_{j=0,\cdots, k-1}\frac{a_{k-j}}{a_{k-j-1}} \\
	&=\frac{1}{ \min_{j=0,\cdots, k-1}\frac{a_{j}}{a_{j+1}}} \\
	&= \frac{1}{\alpha[f]},
\end{align} and that, more generally, an upper bound for the zeros of $\tilde{f}$ is the reciprocal of a lower bound for the zeros of $f$.
\begin{theorem} \label{thm:lb}
	Equality holds for the leftmost inequality of Equation \eqref{eq:bounds} if and only if $N$ is a power of 2.
	\begin{proof}
		We have observed the ``if" direction in the proof of Theorem \ref{thm:bounds}.  For the ``only if" direction, suppose that $N$ is not a power of 2 and let $M\inps{N}$ again signify the minimum odd iterate of $N$ not equal to 1.  Then
		\begin{equation}
			 \beta\left[\tilde{P_N}\right] =  \frac{1}{\alpha\left[P_N\right]} = \frac{3}{2} + \frac{1}{2\cdot M\inps{N}}.
		\end{equation}
	Now, for $j\in[n+1]$,
	\begin{equation}
		\frac{a_{n-\inps{n-j}}}{a_{n-\inps{n+1-j}}} = \frac{a_j}{a_{j-1}} = 
		\begin{cases} 
			\frac{1}{2}, &a_{j-1}\mbox{ even and }j\neq n+1 \\ 
			\frac{3}{2} + \frac{1}{2a_{j-1}}, &a_{j-1}\mbox{ odd} \\
			0, &j = n+1,
		\end{cases}
	\end{equation}
	so that $\beta\left[\tilde{P_N}\right] > \frac{a_j}{a_{j-1}}$ for all but the unique $j$ for which $a_{j-1} = M$.  Thus, there exist at least two consecutive values of $j$ such that $\beta\left[\tilde{P_N}\right] > \frac{a_j}{a_{j-1}}$, implying that $\gcd\inps{j\in S[\tilde{P_N}]} = 1$.  Therefore, by Lemma \ref{lemma:thmb}, $\beta\left[\tilde{P_N}\right]$ is a non-sharp upper bound for the zeros of $\tilde{P_N}$, whence $\alpha\left[P_n\right]$ is a non-sharp lower bound for the zeros of $P_N$.
	\end{proof}
\end{theorem}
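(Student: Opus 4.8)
The plan is to prove the two directions separately, dispatching the ``if'' implication quickly from Theorem~\ref{thm:bounds} and devoting the bulk of the argument to the ``only if'' direction in contrapositive form, i.e.\ that $N$ not being a power of $2$ forces the lower bound to be \emph{strict}. For the ``if'' direction, if $N = 2^m$ then every iterate is a power of $2$, so $N$ has no odd iterate other than $1$; by the convention built into the definition of $M$ (the empty minimum is overridden by $-\frac{1}{2}$) we then have $M = -\frac{1}{2}$, whence $\frac{2M}{3M+1} = 2$. Since the proof of Theorem~\ref{thm:bounds} already shows that in this case $P_N$ is a truncated geometric series with every root on $\aval{z} = 2$, the lower bound $2$ is attained and equality holds.

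For the converse, suppose $N$ is not a power of $2$, so that the least odd iterate $M$ other than $1$ exists and satisfies $M \geq 3$. I want to show that $\alpha[P_N] = \frac{2M}{3M+1}$ is never attained by a root. The first step is to remove the asymmetry between the two Eneström--Kakeya bounds by passing to the reciprocal polynomial $\tilde{P_N}$: since $\beta[\tilde{P_N}] = 1/\alpha[P_N]$ and $w \mapsto 1/w$ is a bijection between the roots of $P_N$ and those of $\tilde{P_N}$, a root of $P_N$ attains $\alpha[P_N]$ if and only if the corresponding root of $\tilde{P_N}$ attains $\beta[\tilde{P_N}]$. Thus it suffices to prove that $\beta[\tilde{P_N}]$ is a \emph{strict} upper bound for the zeros of $\tilde{P_N}$.

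The second step computes the relevant ratios and invokes Lemma~\ref{lemma:thmb}. Because the ratio of consecutive iterates is $2$ at an even term and $\frac{2c^k(N)}{3c^k(N)+1}$ (which increases in $c^k(N)$) at an odd term, the minimal ratio is attained exactly at the smallest odd iterate, giving $\alpha[P_N] = \frac{2M}{3M+1}$ and hence $\beta[\tilde{P_N}] = \frac{3}{2} + \frac{1}{2M}$. By Lemma~\ref{lemma:thmb}, $\beta[\tilde{P_N}]$ is attained if and only if $d := \gcd\{\,j \in S[\tilde{P_N}]\,\} > 1$, so the goal reduces to showing $d = 1$. Here $S[\tilde{P_N}]$ is the set of indices at which the associated consecutive-coefficient ratio is \emph{strictly} below $\beta[\tilde{P_N}]$, and a short calculation shows this ratio equals $\beta[\tilde{P_N}]$ only when the smaller of the two iterates involved equals $M$.

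The main obstacle is the final number-theoretic step of showing $d = 1$. My plan is to use the standing assumption that the trajectory reaches $1$: no value can recur before that point without producing a cycle that never reaches $1$, so each value---in particular $M$---occurs exactly once. Hence the complement of $S[\tilde{P_N}]$ is the single index at which $M$ sits, and a subset of $\{1,\dots,n+1\}$ obtained by deleting one element either contains the index $1$ or contains two consecutive integers; in either case $\gcd\{\,j \in S[\tilde{P_N}]\,\} = 1$. With $d = 1$ in hand, Lemma~\ref{lemma:thmb} makes $\beta[\tilde{P_N}]$ a strict upper bound, so $\alpha[P_N]$ is a strict lower bound for the roots of $P_N$ and equality fails, completing the contrapositive.
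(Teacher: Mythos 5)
Your proposal is correct and takes essentially the same route as the paper: pass to the reciprocal polynomial via $\beta[\tilde{P_N}] = 1/\alpha[P_N]$, compute that the consecutive-coefficient ratio attains $\beta[\tilde{P_N}] = \frac{3}{2} + \frac{1}{2M}$ only at the single index where $M$ occurs, conclude $\gcd\left(j \in S[\tilde{P_N}]\right) = 1$, and invoke Lemma \ref{lemma:thmb} to get strictness. Your explicit justifications --- that $M$ occurs exactly once because a repeated value would create a cycle never reaching $1$, and that deleting one element from $\{1,\dots,n+1\}$ leaves either the index $1$ or two consecutive integers --- simply spell out what the paper's terser ``two consecutive coefficients'' remark leaves implicit.
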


\section{Strictness of the upper bound}
For a given $N$, define the set $T_N:= \sbld{n+1}\cup\sbld{j\in[n] : c^{n-j}(N)\mbox{ is odd}}$.
Then Lemma \ref{lemma:thmb} implies the following sharpness result:
\begin{theorem} \label{thm:s}
	Let $d_N := \gcd(j\in T_N)$.  Then the upper bound of Equation \eqref{eq:bounds} is an equality if and only if $d_N > 1$, in which case
	\begin{enumerate}
		\item the zeros of $P_N$ on $\sbld{\aval{z} = 2}$ are simple and given precisely by $\sbld{2\omega : \omega^{d_N} = 1\land \omega\neq 1}$ and
		\item $P_N$ factors as \begin{equation} P_N(2z) = \inps{1 + z + \cdots + z^{d_N-1}}\cdot Q_N(z^{d_N}),\end{equation}
		where $Q_N$ is a polynomial with positive coefficients.  Moreover, if $Q_N$ is non-constant then all its zeros lie in $\D$ and $\beta[Q_N] \leq 1$.
	\end{enumerate}
\end{theorem}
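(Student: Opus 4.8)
The plan is to apply Lemma \ref{lemma:thmb} to $f = P_N$ once I have shown that the set $S[P_N]$ appearing there is literally equal to $T_N$; everything in the conclusion then transfers verbatim. First I would reconfirm, exactly as in the proof of Theorem \ref{thm:bounds}, that $\beta[P_N] = 2$. The ratio $a_k/a_{k+1}$ equals $2$ precisely when $c^k(N)$ is even and equals $2c^k(N)/(3c^k(N)+1) < 2$ precisely when $c^k(N)$ is odd; since the unique $c$-preimage of $1$ is $2$, we have $c^{n-1}(N) = 2$, so the value $2$ is actually attained and $\beta[P_N] = 2$. Thus the right-hand inequality of \eqref{eq:bounds} is precisely the Eneström–Kakeya bound $\aval{z_N} \leq \beta[P_N]$, and Lemma \ref{lemma:thmb} governs exactly when it is sharp.

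The key step is to translate the defining condition of $S[P_N]$. For $j \in [n]$, substituting $k = n - j$ gives $a_{n-j}/a_{n+1-j} = a_k/a_{k+1}$, which by the dichotomy above is strictly below $\beta[P_N] = 2$ if and only if $a_{n-j} = c^{n-j}(N)$ is odd. For the boundary index $j = n+1$ the ratio $a_{-1}/a_0$ is read as $0 < 2$, so $n+1 \in S[P_N]$ unconditionally. Matching this against $T_N = \sbld{n+1} \cup \sbld{j \in [n] : c^{n-j}(N)\mbox{ is odd}}$ yields $S[P_N] = T_N$, whence $d = \gcd(j \in T_N)$ is exactly the quantity $\gcd(j \in S[P_N])$ of Lemma \ref{lemma:thmb}.

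With this identification the proof finishes by invoking Lemma \ref{lemma:thmb} with $f = P_N$ and $\beta[P_N] = 2$. The equality criterion $1 < d$ is then immediate; the zeros on $\aval{z} = 2$ are the simple points $2\exp(2\pi i j/d)$ for $j = 1, \ldots, d-1$, i.e.\ $\sbld{2\omega : \omega^d = 1 \land \omega \neq 1}$, giving item 1; and the factorization $P_N(\beta[P_N] z) = (1 + z + \cdots + z^{d-1}) q_m(z^d)$ becomes $P_N(2z) = (1 + z + \cdots + z^{d-1}) Q_N(z^d)$ with $Q_N = q_m$ of strictly positive coefficients, the ``moreover'' clause (all zeros of $Q_N$ in $\D$ and $\beta[Q_N] \leq 1$) carrying over directly, giving item 2.

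I expect the only real obstacle to be index bookkeeping rather than any analytic content: one must be careful that the index $j$ in $S[f]$ counts coefficient ratios from the top of the polynomial, so that $j$ encodes the parity of $c^{n-j}(N)$ and not of $c^j(N)$, and that the boundary term $j = n+1$ is handled by the convention $a_{-1} = 0$ so that $n+1$ lies in both $S[P_N]$ and $T_N$. Once the correspondence $j \leftrightarrow c^{n-j}(N)$ is pinned down, the two sets coincide and no further computation is needed.
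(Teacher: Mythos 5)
Your proposal is correct and follows essentially the same route as the paper: both establish $\beta[P_N] = 2$ via the parity dichotomy from the proof of Theorem \ref{thm:bounds}, identify $S[P_N]$ with $T_N$ (using the convention $a_{-1} = 0$ to place $j = n+1$ in both sets), and then invoke Lemma \ref{lemma:thmb} to transfer its conclusions verbatim. Your explicit observation that $c^{n-1}(N) = 2$ guarantees the ratio $2$ is attained is a slightly more careful justification of $\beta[P_N] = 2$ than the paper spells out, but it is the same argument.
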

\begin{proof}
	By the proof of Theorem \ref{thm:bounds} (and the fact that $a_{-1} = 0$ because $P_N$ is a polynomial), the set $T_N$ is precisely the set of $j\in[n+1]$ such that $\frac{a_{n-j}}{a_{n+1-j}} < \beta[P_N] = 2$.  Thus the result follows from applying Lemma \ref{lemma:thmb}.
\end{proof}
The exacting conditions of Theorem \ref{thm:s} suggest that zeros on $\aval{z} = 2$ are rare, and indeed we prove that this is so.  First, we need two lemmas.
\begin{lemma} \label{lemma:cons-ones}
	The number of length $k$ binary strings with no consecutive 1s is $F_{k+1}$, the $\inps{k+1}^{th}$ Fibonacci number.  In particular, let $p_k$ be the probability that a length $k$ binary string selected uniformly at random contains no consecutive 1s.  Then $\lim_{k\rightarrow\infty} p_k = 0$.
\end{lemma}
\begin{proof}
	The first statement holds for $k = 0$ and $1$.  Consider a binary string $x$ of length $k\geq 2$ and let $substring(x, j)$ be the substring of $x$ comprising its first $j$ digits.  If the last digit of $x$ is 0, then $x$ has two consecutive 1s if and only if $substring(x, k-1)$ does.  Otherwise, its last two digits are either $11$, in which case it has consecutive 1s, or $01$, in which case it has consecutive 1s if and only if $substring(x, k-2)$ does.  Therefore, the number of length $k$ binary strings with no consecutive 1s satisfies the same recurrence relation as the Fibonacci numbers. \par
	Since $F_k\sim \frac{\phi^k}{\sqrt{5}}$, $p_k\sim \frac{\phi}{\sqrt{5}}\inps{\frac{\phi}{2}}^k$, and this quantity has limit 0 as $k\rightarrow\infty$.
\end{proof}
\begin{lemma} \label{lemma:sparse-subset}
	Let $X$ be the set of natural numbers $N$ with the property that
	$$c^j\inps{N}\mbox{ odd}\Rightarrow c^{j+1}\inps{N}\mbox{ even}$$
	for all integers $j\geq 0$.  The set $X$ has density zero in the natural numbers.
\end{lemma}
\begin{proof}
	Let $x_j\inps{N}:= c^j\inps{N}\;\inps{\mbox{mod }2}.$ By \citep[Theorem B]{lagarias_1985}, the function $\Z\rightarrow {\Z/2^{k+1}\Z}$ defined by
	\begin{equation}
		N\rightarrow \inps{x_0\inps{N},\dots, x_k\inps{N}}
	\end{equation}
	is periodic with period $2^{k+1}$.  For a fixed $k$, the set $X$ is a subset of the preimage of the set of length $k+1$ strings with no consecutive 1s under this function.  By Lemma \ref{lemma:cons-ones}, as $k\rightarrow\infty$, this preimage becomes sparse as a subset of the natural numbers.
\end{proof}
Lemmas \ref{lemma:cons-ones} and \ref{lemma:sparse-subset} culminate in the following theorem.
\begin{theorem}\label{thm:zerodens}
	The set of $N$ such that $P_N$ has a root on $\aval{z} = 2$ has density 0 in the natural numbers.
\end{theorem}
\begin{proof}
	By Lemma \ref{lemma:sparse-subset}, $T_N$ contains consecutive natural numbers for almost all $N$, so that $\gcd\inps{j\in T_N} = 1$ for almost all $N$.  The conclusion follows from Theorem \ref{thm:s}.
\end{proof}

\section{Final remarks} \label{sec:final-remarks}
We conclude with a few suggestions for further research.
\begin{enumerate}
	\item Strengthen Theorem \ref{thm:lb} by finding an explicit, closed-form lower bound for $\aval{z_N}$ in terms of $N$.
 	\item Strengthen Theorems  \ref{thm:s} and \ref{thm:zerodens} by finding an explicit, closed-form upper bound for $\aval{z_N}$ in terms of $N$.
	\item Figure \ref{fig:zeros} suggests that certain subsets of $\sbld{z\in\C\::\:\aval{z} \leq 2}$ contain large clusters of Collatz zeros while others are zero-free.  Give an algorithm for proving that a subset of $\sbld{z\in\C\::\:\aval{z} \leq 2}$ is free of Collatz zeros.
	\item While Descartes' Rule of Signs implies that no Collatz zero is positive real, there appears to be a sequence of zeros that approaches $z = 1$ arbitrarily closely, bounded by a parabola with vertex at $z = 1$.  Prove or disprove the existence of such a convergent sequence and parabola.
	\item Under some modest assumptions, the zeros of a polynomial with random coefficients cluster near the unit circle with uniform angular distribution \citep{hughes_nikeghbali_2008}.  Prove or disprove that these assumptions hold for the Collatz polynomials.
	\item Find the Galois groups of $P_N$ for general classes of $P_N$.
	\item Expanding on Theorem \ref{thm:s}, find conditions under which $P_N$ has zeros at real multiples (other than 2) of roots of unity.
	\item Except for Theorem \ref{thm:lb}, all the theorems in this paper are functions of the Collatz sequence of $N$ rather than of $N$ itself.  Prove theorems on $z_N$ which can be applied without calculating the Collatz sequence of $N$ (\ic{e.g.}, if $N\equiv 3\mbox{ mod }4$, then the zeros of $P_N$ lie in ...).
	\item Our results rest on the assumption, equivalent to the Collatz conjecture itself, that $P_N$ is a polynomial for all $N$.  Using other methods of proof, find a property of Collatz zeros that contradicts a theorem in this paper, thereby disproving the Collatz conjecture. \label{item:polyn}
\end{enumerate}
\section*{Acknowledgements}
I would like to thank Harold Boas for his suggestions.
\section*{Declaration of interests}
The authors report there are no competing interests to declare.
\section*{Figures}
	\begin{figure}[h]
	\centering
	\includegraphics[width=1\textwidth]{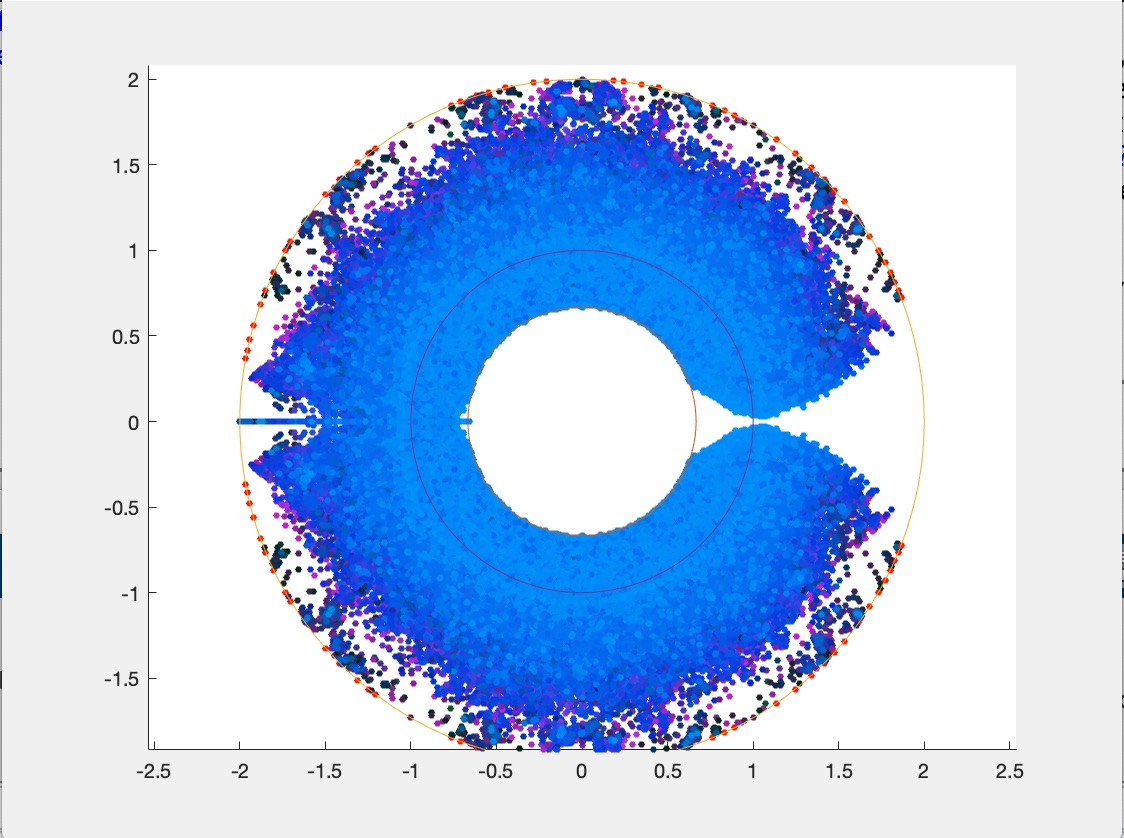}
	\caption{Complex plot of the zeros of $P_N$ for $2\leq N \leq 2^{16}$.}
	\label{fig:zeros}
\end{figure}

\listoffigures

\clearpage 
\setcitestyle{numbers}
\bibliography{Bounds-Collatz-Zeros}

\begin{thebibliography}{5}
\providecommand{\natexlab}[1]{#1}
\providecommand{\url}[1]{\texttt{#1}}
\expandafter\ifx\csname urlstyle\endcsname\relax
  \providecommand{\doi}[1]{doi: #1}\else
  \providecommand{\doi}{doi: \begingroup \urlstyle{rm}\Url}\fi

\bibitem[Anderson et~al.(1981)Anderson, Saff, and Varga]{eksharp}
N.~Anderson, E.~B. Saff, and R.~S. Varga.
\newblock An extension of the {E}neström-{K}akeya theorem and its sharpness.
\newblock \emph{SIAM Journal on Mathematical Analysis}, 12\penalty0
  (1):\penalty0 10–22, 1981.
\newblock \doi{10.1137/0512002}.

\bibitem[Boas()]{boas}
Harold Boas.
\newblock Private communication.

\bibitem[Hohertz and Kalantari(2020)]{hohertz_kalantari}
Matt Hohertz and Bahman Kalantari.
\newblock Collatz polynomials: an introduction with bounds on their zeros.
\newblock 2020.
\newblock URL \url{https://arxiv.org/pdf/2001.00482v2.pdf}.

\bibitem[Hughes and Nikeghbali(2008)]{hughes_nikeghbali_2008}
C.~P. Hughes and A.~Nikeghbali.
\newblock The zeros of random polynomials cluster uniformly near the unit
  circle.
\newblock \emph{Compositio Mathematica}, 144\penalty0 (3):\penalty0 734–746,
  2008.
\newblock \doi{10.1112/s0010437x07003302}.

\bibitem[Lagarias(1985)]{lagarias_1985}
Jeffrey~C. Lagarias.
\newblock The $3x+1$ problem and its generalizations.
\newblock \emph{The American Mathematical Monthly}, 92:\penalty0 3–23, 1985.
\newblock \doi{10.2307/2322189}.
\newblock URL \url{http://www.cecm.sfu.ca/organics/papers/lagarias/index.html}.
\newblock As reprinted in Conference on Organic Mathematics, Canadian Math.
  Society Conference Proceedings, vol. 20, 1997, pp. 305-331, and posted online
  at \url{http://www.cecm.sfu.ca/organics/papers/lagarias/index.html.}

\end{thebibliography}

\end{document}